\newcommand{\li}{\ell_\infty}
\renewcommand{\sectionmark}[1]{\markright{{\jobname{\textrm.tex}\ \ \ \ \ \ \ \ \ \today\ \ \ \ }}} 
\setlist{nolistsep}
\newcommand{\F}{{\mbox F}}
\newcommand{\PG}{{\textup{PG}}}
\newcommand{\PGammaL}{\textup{P}\Gamma{\textup {L}}}
\newcommand{\Fqq}{\mathbb{F}_{q^2}}
\newcommand{\Fq}{\mathbb{F}_{q}}
\newcommand{\T}{\mathcal{T}}
\newcommand\Tr{\mbox{Tr}}
\newcommand\U{{\mathcal U}}
\newcommand\Uab{{\mathcal U}_{a,b}}
\newcommand\Uabp{{\mathcal U}_{a',b'}}
\newtheorem{theorem}{Theorem}
\newtheorem{lemma}[theorem]{Lemma}
\newtheorem{corollary}[theorem]{Corollary}
\newtheorem{conjecture}[theorem]{{\color{CornflowerBlue}{Conjecture}}}
\newtheorem{defn}[theorem]{Definition}
\theoremstyle{remark} 
\newtheorem{example}{Example}
\newcommand{\Label}{\label}
\newcommand{\Labele}{\label}
\begin{document}

\title{A Note on Fano Planes in Orthogonal Buekenhout-Metz Unitals of Even Order}
\author{Wen-Ai Jackson\footnote{School of Mathematical Sciences, University of Adelaide, Adelaide 5005, Australia, Email:\tt{wen.jackson@adelaide.edu.au}}\ \  and Peter Wild\footnote{Royal Holloway, University of London, Egham TW20 0EX, Email:\tt{peterrwild@gmail.com}}}


\maketitle
AMS code: 51E20\\
Keywords: unital, Buekenhout-Metz unital, O'Nan configuration

\begin{abstract}An O'Nan configuration in a unital is a set of four lines forming a quadrilateral, where the six intersections of pairs of lines are points of the unital.  In 2019 Feng and Li~\cite{FengLi2019} elegantly construct O'Nan configurations in orthogonal and Tits Buekenhout-Metz unitals. We extend their work by extending their construction to a Fano plane embedded in the orthogonal Buekenhout-Metz unital of even order. {We deduce that there exist O'Nan configurations in orthogonal Buekenhout-Metz unitals different to those of Feng and Li, and make a conjecture about Fano planes embedded in orthogonal Buekenhout-Metz unitals.}
\end{abstract}

\section{Introduction}\Label{sec:intro}

A \emph{unital of order $n$} is a 2--$(n^3+1,n+1,1)$ design. Unitals of prime power order $q$ exist in the Desarguesian projective planes $\PG(2,q^2)$, as the absolute points and lines of a hermitian polarity. Such unitals are called \emph{classical} unitals. Buekenhout \cite{Buekenhout1976} constructed unitals in 2--dimensional translation planes. This was generalised by Metz \cite{Metz1979} to construct non-classical unitals in $\PG(2,q^2)$. These unitals are called Buekenhout-Metz (BM) unitals. They have the property that every line of $\PG(2,q)$ meets the unital in 1 or $q+1$ points. Further, the unital has exactly one point on the line at infinity, called the \emph{special} point. For a comprehensive treatise on unitals, see Barwick and Ebert \cite{thebook}.

An \emph{O'Nan configuration} in a unital is a set of four distinct lines forming a quadrilateral, whose six points of intersection lie on the unital.  In 1972, O'Nan \cite{ONan1972} noted that these configurations do not occur in the classical unital.  It was conjectured by Piper \cite{Piper1979} that this property characterizes the classical unital within the class of all unitals.

In 2019, Feng and Li \cite{FengLi2019} showed the existence of O'Nan configurations in orthogonal BM unitals in $\PG(2,q^2)$, for both $q$ even and odd.  They also show the existence of O'Nan configurations in the Buekenhout-Tits unital.

In this paper, we use their results to show the existence of Fano planes in BM unitals of even order, which supports the following conjecture:

\begin{conjecture}
  Suppose $q$ is even. Every non-classical unital $\U$ in $\PG(2,q^2)$ contains a Fano plane  if $q\ge 4$.
\end{conjecture}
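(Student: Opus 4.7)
The plan is to attack the conjecture by handling the known families of non-classical unitals in $\PG(2,q^2)$ one family at a time. For $q$ even, the two main families with an explicit coordinate description are the orthogonal Buekenhout-Metz unitals and the Buekenhout-Tits unitals, and these are the cases one can hope to settle by direct construction. The crucial observation is that in characteristic $2$ the Fano axiom \emph{fails}: the three diagonal points of every complete quadrangle of $\PG(2,q^2)$ are automatically collinear. Consequently any O'Nan configuration in a unital $\U$ -- four points of $\U$ whose six joining lines meet $\U$ in a total of six points -- is already almost a Fano subplane; one only needs the three diagonal points to also lie on $\U$.

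With this reduction in mind, I would start from the O'Nan configuration produced by Feng and Li~\cite{FengLi2019} in an orthogonal BM unital. Writing the four quadrilateral points in the standard affine coordinates used in the BM parametrisation, I would compute the three diagonal points $P=AB\cap CD$, $Q=AC\cap BD$, $R=AD\cap BC$ explicitly in $\Fqq$ and check that each satisfies the quadratic defining the unital. Since $P,Q,R$ are already forced to be collinear, once they are shown to lie on $\U$, the seven points $A,B,C,D,P,Q,R$ together with the seven chord lines form a Fano subplane; this would yield the main theorem of the paper and, as a by-product, produce many new O'Nan configurations different from the one we started with (answering the second claim of the abstract).

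For the Buekenhout-Tits unitals the same strategy should apply starting from the Feng-Li O'Nan configuration in that family, with only the algebraic computation changed. The real obstacle, and what keeps the statement a conjecture rather than a theorem, is that no classification of unitals in $\PG(2,q^2)$ is known: for small $q$ sporadic non-classical unitals could exist outside the BM and Tits families. Short of a classification, a possible alternative route would be a purely combinatorial argument valid for every non-classical unital -- for instance, showing that when $q$ is even and $q\geq 4$ such a unital must admit some complete quadrangle whose three (automatically collinear) diagonal points also lie on the unital -- and finding such an argument, which avoids any coordinate description of $\U$, looks like the genuinely hard part.
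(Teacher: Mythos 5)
You are right to treat this statement as a conjecture rather than a theorem: it quantifies over \emph{all} non-classical unitals of $\PG(2,q^2)$, no classification of which is known, so no family-by-family construction can close it. What the paper actually establishes is the orthogonal Buekenhout--Metz case, and there your reduction is essentially the one the paper uses: in characteristic $2$ the extra incidence needed to complete the Feng--Li configuration to a Fano plane is automatic, so only membership in $\U$ of the new element must be checked. Two remarks on the details. First, you have dualised the configuration: an O'Nan configuration is four \emph{lines} carrying six points of $\U$ (a complete quadrilateral), so the completion requires three new lines (the diagonals, concurrent in characteristic $2$) and just \emph{one} new point; in your quadrangle formulation two of the three diagonal points are already among the six given points, so your ``check three diagonal points'' collapses to checking one. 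Second, the paper avoids your explicit coordinate computation entirely: the seventh point $N=QR\cap Q^{\phi}R^{\phi}$ is fixed by the Frobenius collineation $\phi$ and lies on the line $PM=[1,0,0]$, whose $\phi$-fixed affine points are precisely the points $(0,s,1)$, $s\in\Fq$, of the chord $B_\infty\subset\Uab$; hence $N\in\Uab$ with no substitution into the defining quadratic. Your computational route would also work, but the fixed-point argument is what makes the proof uniform in $q$ and $b$.

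The genuine gap is your claim that ``the same strategy should apply'' to the Buekenhout--Tits unitals. The paper's computations show it does not: for $q=128$ \emph{no} Feng--Li O'Nan configuration in the Tits unital completes to a Fano plane --- the concurrency point of the diagonals fails to lie on the unital --- and the existence of a Fano plane in that unital had to be established by a separate search, not by extending a Feng--Li configuration. So the diagonal-completion argument is not uniform even over the two explicit families with coordinates, and this, together with the absence of a classification of unitals that you correctly flag, is why the statement remains a conjecture.
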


\section{Buekenhout-Metz unitals}

\subsection{Field properties for even $q$}\Label{sec:eqns}

We give properties of $\Fqq$ where $q$ is an even prime power.

In general, consider a prime $p$ and a positive integer $m$ with  $q=p^m>2$. Suppose that $d$ is any divisor of $m$, then the trace function from $\Fq$ to $\mathbb F_{p^d}$ is
\[
\Tr_{p^m/p^d}(x)=x+x^{p^d}+x^{p^{d+1}}+\cdots+x^{p^{m-d}}.
\]

Suppose now that $q=2^m>2 $. The \emph{absolute trace} of an element $x$ of $\Fq$ is $\Tr_{2^m/2}(x)$.

Let $\T_0,\T_1$ be the set of elements of $\Fq$ with absolute trace 0 and 1 respectively. From~\cite[Lemma 4.21]{thebook}, we can choose $\delta\in\Fqq\backslash\Fq$ such that $\delta^q=\delta+1$, $\delta^2=\delta+v$ for some $v\in\T_1,v\ne 1$. Then $\{1,\delta\}$ is a basis for $\Fqq$ over $\Fq$.

For any element $x$ of $\Fqq$ we will also use the notation
\[
\boxed{x}=x+x^q=\Tr_{q^2/q}(x).
\]

For $k=k_0+k_1\delta$,  with $k_0,k_1\in\Fq$ it is easy to calculate $k^q=(k_0+k_1)+k_1\delta = k+k_1$, $k^{q+1}=k^qk=k_0^2+k_0k_1+vk_1^2$, $\boxed{k}= k+k^q=k_1$, $\boxed{k\delta}=(k\delta)+(k\delta)^q=k_0+k_1$ and $\boxed{k\delta^q}=(k\delta^q)+(k^q\delta)=k_0$.

\subsection{Coordinates of the orthogonal BM unitals}
Ebert \cite{ebertBMeven} showed that an orthogonal Buekenhout-Metz unital $\Uab$ in $\PG(2,q^2)$ is projectively equivalent to the following set of points
\[
\Uab=\{(x,ax^2+bx^{q+1}+r,1)\bigm| r\in\Fq,\ x\in\Fqq\}\cup\{T_\infty=(0,1,0)\}.
\]
where the discriminant $d=\frac{a^{q+1}}{(b^q+b)^2}$ belongs to $\T_0$, the set of elements of \emph{absolute trace} 0 of $\Fq$.

Further $\Uab$ is classical if and only if $a=0$.

Consequently, an affine point $P=(x,y,1)$ belongs to $\Uab$ if and only if
\begin{equation}
 ax^2+a^qx^{2q}+(b+b^q)x^{q+1}+y+y^q =0.\Labele{eqn:upt}
\end{equation}


\subsection{The automorphism group of the orthogonal BM unital, even $q$}\Label{sec:autosG}

This was investigated by Ebert \cite{ebertBMeven} but we use the notation from \cite[Theorem 4.23]{thebook}.

We assume that the BM unital $\Uab$ is not classical, that is, $a\ne 0$. Let $\F$ be the smallest subfield of $\Fq$ containing the discriminant $d$. Let $G$ be the subgroup of the automorphism group $\PGammaL(3,q^2)$ of $\PG(2,q^2)$ leaving $\Uab$ invariant. Then $G$ also fixes $T_\infty$.

The order of $G$ is $mq^3(q-1)$ where $m$ is the dimension of $\Fqq$ over $\F$. $G$ has four point orbits, namely, $\{T_\infty\}$, $\Uab\backslash T_\infty$, $\li\backslash T_\infty$ and $\PG(2,q^2)\backslash(\Uab\cup\li)$.

\subsection{The equivalence of orthogonal BM unitals, even $q$}\Label{sec:equiv}
From Ebert~\cite{ebertBMeven}, two BM unitals $\Uab$, $\U_{a',b'}$ are equivalent if and only if we can find $v\in\Fq^*$, $\gamma\in\Fqq^*$ and $u\in\Fq$ and $\tau$ a field automorphism of $\Fq$ with
\[
(a',b')=(a^\tau\gamma^2 v,\ \ b^\tau\gamma^{q+1}v+u).
\]

For $q=2^n$, the number of inequivalent orthogonal BM unitals in $\PG(2,q^2)$, for $n=2,3,4,5,6$ is
$2,2,4,4,8$
respectively, one of which is the classical unital \cite[Theorem 4.14]{thebook}.

\begin{lemma} \Label{lem:ab}For even $q$, if\ \ $\Uab $ is any non-classical orthogonal BM unital, then without loss of generality we may assume
  \begin{enumerate}
  \item $a=1$, or
  \item $a\in\Fq^*$ and $b=\delta$.
  \end{enumerate}
\end{lemma}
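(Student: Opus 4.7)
The plan is to apply the equivalence criterion from Section~\ref{sec:equiv}: two BM unitals $\Uab$ and $\U_{a',b'}$ are equivalent iff $(a',b') = (a^\tau \gamma^2 v,\ b^\tau \gamma^{q+1} v + u)$ for some $v \in \Fq^*$, $\gamma \in \Fqq^*$, $u \in \Fq$ and field automorphism $\tau$. I take $\tau = \mathrm{id}$ throughout; the remaining freedom in $(\gamma, v, u)$ is already enough. Since $\Uab$ is non-classical, $a \neq 0$.

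\textbf{Part (1).} Set $v = 1$, $u = 0$ and look for $\gamma \in \Fqq^*$ with $\gamma^2 = 1/a$. Since $q$ is even, squaring is a bijection on $\Fqq^*$, so $\gamma = a^{-1/2}$ exists and gives $a' = 1$.

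\textbf{Part (2).} Here the target is $b' = \delta$ together with $a' \in \Fq^*$. Since $\gamma^{q+1} v$ and $u$ both lie in $\Fq$, applying $\boxed{\cdot}$ to $b \gamma^{q+1} v + u = \delta$ yields $\gamma^{q+1} v\,\boxed{b} = \boxed{\delta} = 1$. The discriminant $d = a^{q+1}/\boxed{b}^2$ is defined, so $\boxed{b} \neq 0$, forcing $\gamma^{q+1} v = 1/\boxed{b}$ and then $u = \delta + b/\boxed{b}$ which lies in $\Fq$ (since $\boxed{u} = 0$). Substituting $v = 1/(\gamma^{q+1}\boxed{b})$ into $a' = a\gamma^2 v$ gives $a' = a\gamma^{1-q}/\boxed{b}$, so the remaining requirement $a' \in \Fq^*$ is equivalent to $\gamma^{(q-1)^2} = a^{q-1}$.

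The main obstacle is solving this last power equation. The element $a^{q-1}$ lies in the norm-one subgroup $H = \{z \in \Fqq^* : z^{q+1} = 1\}$ of order $q+1$. Because $q$ is even, $q+1$ is odd, so $\gcd(q-1, q+1) = 1$; hence $\alpha \mapsto \alpha^{q-1}$ is a bijection on $H$, producing $\beta \in H$ with $\beta^{q-1} = a^{q-1}$. Combined with surjectivity of $\gamma \mapsto \gamma^{q-1} \colon \Fqq^* \to H$ (whose kernel is $\Fq^*$), this yields the required $\gamma$, after which $v \in \Fq^*$ is determined automatically. The use of $q$ even is essential, since for odd $q$ one would pick up a factor of $2$ in the $\gcd$ and lose bijectivity; this is why the lemma is restricted to even order.
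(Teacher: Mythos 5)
Your proof is correct. Part (1) is exactly the paper's argument: with $\tau=\mathrm{id}$, $v=1$, $u=0$ and $\gamma=a^{-1/2}$ (squaring being bijective in characteristic $2$). For Part (2) the paper gives no argument at all --- it simply cites Feng and Li \cite[Section 3.2]{FengLi2019} --- so your contribution here is a genuine, self-contained proof of the normalisation $b=\delta$. Your reduction is clean: forcing $b'=\delta$ pins down $\gamma^{q+1}v=1/\boxed{b}$ (using $\boxed{\delta}=1$ and $\boxed{b}\ne 0$ from the discriminant) and $u=\delta+b/\boxed{b}\in\Fq$, after which the requirement $a'=a\gamma^{1-q}/\boxed{b}\in\Fq^*$ becomes the power equation $\gamma^{(q-1)^2}=a^{q-1}$; solving it via the norm-one subgroup $H$ of order $q+1$ and $\gcd(q-1,q+1)=1$ for even $q$ is exactly where the parity hypothesis enters, and your observation that the same route fails for odd $q$ is apt. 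One small point worth making explicit if this were to replace the citation: the transformation preserves the discriminant ($d'=(a')^{q+1}/\boxed{b'}^{\,2}=a^{q+1}/\boxed{b}^{\,2}=d\in\T_0$), so $\U_{a',b'}$ is indeed again an orthogonal BM unital; this is implicit in the equivalence criterion of Section~\ref{sec:equiv} but costs one line to verify.
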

\begin{proof}

  From above, given $\Uab$, then $\U_{a',b'}$ with
  \[ (a',b')=(a^\tau\gamma^2\nu,\ \ b^\tau\gamma^{q+1}+u)
  \]
  is an equivalent unital if $\nu\in\Fq^*$, $\gamma\in\Fqq^*$ and $u\in\Fq$.
For Part 1, we take $\tau=$ identity, $\gamma=1/\sqrt a$, $\nu=1$ and $u=0$ to get $(1,b/(\sqrt a)^{q+1})$, as required.

For Part 2, the proof is in Feng and Li~\cite[Section 3.2]{FengLi2019}.
\end{proof}
\subsection{Fano planes in Buekenhout-Metz unitals}
It is well known that no O'Nan configuration contains the special (infinite) point $T_\infty=(0,1,0)$ of a Buekenhout-Metz unital \cite[Lemma 7.42]{thebook}. Hence it follows there is no Fano plane containing the special point $T_\infty$. However, it is possible that a line of the Fano plane, regarded as a line of $\PG(2,q^2)$,  contains $T_\infty$.

\begin{defn} In $\PG(2,q)$, $q$ even, consider a BM unital  $\U$ with special point $T_\infty$ at infinity.
  We say a Fano plane in a BM unital embedded in $\PG(2,q^2)$ with special point $T_\infty$ is a \emph{BM-special Fano plane} if one of the lines of the Fano plane, when considered as a line as of $\PG(2,q^2)$, contains the special point $T_\infty$;
  Otherwise, we call it a  \emph{BM-ordinary Fano plane}.
\end{defn}

\subsection{Contribution of  this paper}

Feng and Li \cite{FengLi2019} have three  beautiful constructions of O'Nan configurations. In this paper we consider  their  O'Nan configurations in the orthogonal case for even $q$. Their O'Nan configurations have a line which contains the special point $T_\infty$.

We begin by showing that the O'Nan configuration of Feng and Li can be extended to a Fano plane.

\begin{theorem}\Label{thm:qeven}
  The O'Nan configuration of Feng and Li  \cite{FengLi2019} for orthogonal BM unitals of even order, can be extended to a Fano plane contained within the BM unital, that is, to a BM-special Fano plane.
\end{theorem}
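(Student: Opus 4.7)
The plan is to start from the Feng--Li configuration, use the failure of Fano's axiom in characteristic $2$ to produce a forced seventh point $P$, and then verify algebraically that $P$ lies on $\Uab$.

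Let the Feng--Li lines be $\ell_1,\ldots,\ell_4$ with pairwise intersections $P_{ij}=\ell_i\cap\ell_j\in\Uab$, where $\ell_1$ is the line containing $T_\infty$. Since $\mathrm{char}(\Fqq)=2$, Fano's axiom fails, so the three diagonals of any complete quadrilateral,
\[
d_1=P_{12}P_{34},\qquad d_2=P_{13}P_{24},\qquad d_3=P_{14}P_{23},
\]
are concurrent at a single point $P\in\PG(2,q^2)$. The seven points $\{P_{ij}\}\cup\{P\}$ together with the seven lines $\{\ell_i\}\cup\{d_j\}$ then realise the Fano incidence structure inside $\PG(2,q^2)$, provided $P$ is distinct from each $P_{ij}$ --- automatic because no three of $\ell_1,\ldots,\ell_4$ are concurrent in an O'Nan configuration. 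It therefore suffices to prove $P\in\Uab$: each diagonal $d_j$ is then automatically a secant of $\Uab$ (it already carries the two unital vertices joining it plus $P$), and because $\ell_1\ni T_\infty$ the resulting Fano plane is BM-special in the sense of the definition.

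To verify $P\in\Uab$, I would first normalise: by Lemma~\ref{lem:ab} put $(a,b)$ in one of the canonical forms, and by transitivity of $G$ on $\Uab\setminus\{T_\infty\}$ (Section~\ref{sec:autosG}) place one vertex, say $P_{23}$, at the affine origin $(0,0,1)$. Using the explicit Feng--Li coordinates of $P_{12},P_{13},P_{14},P_{24},P_{34}$, I form any two diagonals $d_j,d_k$ and solve their intersection for $P=(x_P,y_P,1)$ as a rational expression in the Feng--Li parameters. I then substitute $(x_P,y_P)$ into the unital equation
\[
ax_P^2+a^qx_P^{2q}+(b+b^q)x_P^{q+1}+y_P+y_P^q=0,
\]
and reduce using the arithmetic of Section~\ref{sec:eqns}: $x+x^q=\boxed{x}$, $\delta^q=\delta+1$, $\delta^2=\delta+v$ with $v\in\T_1$, the $\Fq$-linearity of $\Tr_{q^2/q}$, and the discriminant condition $d=a^{q+1}/(b+b^q)^2\in\T_0$. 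The six O'Nan relations for the $P_{ij}$ should combine to produce the required identity at $P$, repeatedly exploiting the characteristic-$2$ feature that $x+x=0$.

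The main obstacle will be precisely this last algebraic verification: projective-geometric reasoning alone does not force $P\in\Uab$, so everything reduces to a single, slightly delicate identity in $\Fqq$. The cleanest strategy is probably to recognise the desired identity as an explicit $\Fq$-linear combination of the six O'Nan relations $P_{ij}\in\Uab$, so that the denominators that appear when one writes $(x_P,y_P)$ in closed form cancel structurally rather than by brute expansion. Once this identity is checked, distinctness of the seven points and incidence of the seven lines are immediate from the quadrilateral/diagonal setup, and the presence of $T_\infty$ on $\ell_1$ then gives the BM-special conclusion claimed in Theorem~\ref{thm:qeven}.
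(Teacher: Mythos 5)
Your geometric skeleton coincides with the paper's: in characteristic $2$ the three diagonals of the Feng--Li quadrilateral are concurrent, the concurrency point is the forced seventh point, and the entire theorem reduces to showing that this diagonal point lies on $\Uab$. But that is exactly the step you do not carry out. You describe a plan (``I would first normalise \dots I then substitute \dots the six O'Nan relations should combine to produce the required identity'') and you yourself flag this verification as the main obstacle. As written, the one nontrivial claim of the theorem is reduced to a ``slightly delicate identity'' in $\Fqq$ that is conjectured rather than proved, and nothing in your setup guarantees a priori that the six membership relations for the vertices force membership of the diagonal point. That is a genuine gap, not a routine omission.

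The paper closes this gap with no computation at all, by using a symmetry of the Feng--Li configuration that your proposal ignores: the configuration is invariant under the Frobenius collineation $\phi\colon(x,y,z)\mapsto(x^q,y^q,z^q)$, its six vertices being $P=(0,0,1)$, $Q$, $R$, $Q^\phi$, $R^\phi$ and $M=QQ^\phi\cap RR^\phi$. The diagonal point is $N=QR\cap Q^\phi R^\phi$, so $N^\phi=N$. Since $N$ also lies on the third diagonal $PM$, which is the line $[1,0,0]$, and the $\phi$-fixed affine points of that line are precisely the points $(0,s,1)$ with $s\in\Fq$, all of which satisfy the unital equation, one gets $N\in\Uab$ immediately. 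To complete your route you would have to exhibit explicitly the combination of the O'Nan relations that yields the identity at the diagonal point; the $\phi$-equivariance argument makes that unnecessary and is what turns the proof into a few lines.
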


We further investigate the BM-special Fano planes, and prove the following:

\begin{theorem}\Label{thm:numeven}
  Let $q=2^m>2$. Let $\U_1(1,b)$ and $\U_2(1,b')$ be any two non-classical orthogonal BM unitals of $\PG(2,q^2)$. Then the number of BM-special Fano planes in $\U_1$ and $\U_2$ are the same.
\end{theorem}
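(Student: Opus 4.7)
My plan is to reduce the count of BM-special Fano planes via group-theoretic transitivity to a count for a fixed special line, and then to establish that this reduced count depends only on $q$, via a compatibility bijection between Fano planes of $\U_{1,b}$ and $\U_{1,b'}$.

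First, using the group $G$ of Section~\ref{sec:autosG}: because $G$ acts transitively on $\U_{1,b}\setminus\{T_\infty\}$ and fixes $T_\infty$, it preserves and acts transitively on the fibration of this orbit by the $q^2$ vertical lines. Moreover, any BM-special Fano plane has exactly one line through $T_\infty$, since two such lines would meet at $T_\infty$, forcing $T_\infty$ into the Fano plane and contradicting the absence of O'Nan configurations through $T_\infty$ (\cite[Lemma 7.42]{thebook}). Hence the total count equals $q^2\cdot f_b$, where $f_b$ counts BM-special Fano planes with special line $x=0$, and it suffices to show $f_b$ depends only on $q$.

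Second, I would parameterize such a Fano plane by its four off-line points $Q_j=(x_j,y_j,1)$ with $x_j\in\Fqq^*$ distinct, and express the Fano incidence through $\rho_{ab}:=(x_a y_b+x_b y_a)/(x_a+x_b)$, the $y$-intercept of the secant $Q_aQ_b$. The incidence decomposes into (i) $\rho_{ab}\in\Fq$ for each of the 6 pairs (ensuring the three diagonals are unital points), and (ii) the three opposite-side equalities $\rho_{12}=\rho_{34}$, $\rho_{13}=\rho_{24}$, $\rho_{14}=\rho_{23}$. In the $(x_j,r'_j)$-parameterization ($y_j=x_j^2+bx_j^{q+1}+r'_j$, $r'_j\in\Fq$), direct computation gives
\[
\rho_{ab}=x_a x_b+b\,\mu_{ab}+\frac{x_a r'_b+x_b r'_a}{x_a+x_b},\qquad \mu_{ab}:=x_a x_b(x_a+x_b)^{q-1},
\]
so all nine Fano conditions become $\Fq$-affine linear in $(r'_1,\ldots,r'_4)$, with linear parts independent of $b$ and constants depending on $(x_j,b)$.

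Third, I would try the map $\Phi_\eta(x,y,1):=(x,y+\eta x^{q+1},1)$ with $\eta:=b+b'\in\Fqq$, which bijects $\U_{1,b}$ with $\U_{1,b'}$, preserves each vertical line setwise, and fixes the residue $r'_j$ of every off-line point. Under $\Phi_\eta$, the value $\rho_{ab}$ shifts by exactly $\eta\,\mu_{ab}$, so the Fano system for $\U_{1,b}$ transforms to an ``off-by-$\eta\mu$'' version for $\U_{1,b'}$. To recover a true bijection of Fano planes, I would compose $\Phi_\eta$ with an $\Fq$-affine shift $r'_j\mapsto r'_j+\Delta_j$ designed to cancel the unwanted $\eta\mu_{ab}$ terms in each of the nine conditions.

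The main obstacle is the solvability of this correction system: the shift $\Delta=(\Delta_1,\ldots,\Delta_4)\in\Fq^4$ must satisfy six relations (one per pair) expressing that $L_{ab}(\Delta)+\eta\mu_{ab}$ lies in the kernel of the linear map governing conditions (i) and (ii), where $L_{ab}(\Delta)=(x_a\Delta_b+x_b\Delta_a)/(x_a+x_b)$. This system is overdetermined in general, so I would need to verify that the conditions (i) satisfied by any Fano-eligible 4-tuple imply exactly the algebraic identities needed for $\Delta$ to exist, and to be unique up to the one-dimensional shift symmetry $\Delta_j\mapsto\Delta_j+t$, $t\in\Fq$. Establishing this compatibility, which ties together the pair-wise trace structure with the Fano incidence, is the technical heart of the argument and the principal obstacle.
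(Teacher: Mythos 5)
Your overall strategy---fix the special line, keep the first coordinates $x_j$ of the four off-line points, and pass from $\U_{1,b}$ to $\U_{1,b'}$ by adjusting only the second coordinates---is essentially the strategy of the paper's proof, and your preliminary reductions are sound: the uniqueness of the Fano line through $T_\infty$, the transitivity of $G$ on the vertical lines, the formula for $\rho_{ab}$, and the shift $\rho_{ab}\mapsto\rho_{ab}+\eta\mu_{ab}$ under $\Phi_\eta$ are all correct. But the argument stops exactly where the theorem lives: you do not establish that the correction vector $\Delta\in\Fq^4$ exists, and you yourself flag its existence as ``the principal obstacle.'' The solvability of that overdetermined system is not a verification that can be deferred---it is the entire content of the result, because it is precisely there that the hypothesis that the original seven points form a Fano plane in $\U_{1,b}$ must be exploited. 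As written, the proposal reduces the theorem to an unproved compatibility claim rather than proving it.

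For comparison, the paper closes this gap as follows. It parameterizes the configuration by $(\theta,h,k,s,t)$ with $\theta=xk$ the preserved first-coordinate datum, notes that the membership conditions for $P$ and $Q$ (equations (\ref{eqn:Pd}), (\ref{eqn:Qd})) determine $k$ from $(\theta,h,b_1)$, and defines $k'$ for $\U_{1,b'}$ by the analogous equations (\ref{eqn:Pdd}), (\ref{eqn:Qdd}); this single choice of $k'$ forces the adjustment of all four off-line points, since $M'=(\theta W,k'W+U,1)$ and $N'=(\theta V,k'V+Z,1)$. The nontrivial step---the one missing from your proposal---is showing that the membership conditions for $M'$ and $N'$ then hold automatically. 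The paper does this by reducing (\ref{eqn:Md})$\,+\,$(\ref{eqn:Mdd}) to the identity (\ref{eqn:join}), namely $(b_1+b_1')\theta^{q+1}W^{q+1}=(k+k')W+(k+k')^qW^q$, and verifying it by a chain of trace manipulations that substitutes (\ref{eqn:Pd})--(\ref{eqn:Qdd}) back into the right-hand side. To complete your version you would need either an equivalent of that computation for your $\Delta$-system, or a direct proof that the six conditions $L_{ab}(\Delta)+\eta\mu_{ab}\in\Fq$ together with the three opposite-side difference relations are consistent whenever conditions (i) and (ii) hold for the original tuple; neither is supplied, so the proof is incomplete at its decisive step.
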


Note that  for $q=4,8$ there is only one equivalence class of non-classical unitals, so the above theorem holds trivially.

Since a Fano plane contains seven O'Nan configurations we have the following Corollary.

\begin{corollary}
  Let $q=2^m>2$. In orthogonal BM unitals of $\PG(2,q^2)$, there exist O'Nan configurations, where none of the four lines of the configuration is incident with the special point $T_\infty$ of the BM unital.
\end{corollary}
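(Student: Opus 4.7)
The plan is to invoke Theorem~\ref{thm:qeven} and then extract the desired O'Nan configuration combinatorially from within the Fano plane it provides. By Theorem~\ref{thm:qeven}, the unital $\U$ contains a BM-special Fano plane $\mathcal F$; let $\ell^*$ denote the unique line of $\mathcal F$ that, viewed as a line of $\PG(2,q^2)$, is incident with $T_\infty$. Since $\ell^*$ is the only Fano line through $T_\infty$, it suffices to exhibit an O'Nan configuration inside $\mathcal F$ whose four lines are all different from $\ell^*$.

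First I would catalogue the quadrilaterals of a Fano plane. By duality with the fact that $\PG(2,2)$ has exactly seven $4$-arcs, each the complement of a line, the seven quadrilaterals of $\mathcal F$ are parameterised by its seven points: to each point $P$ of $\mathcal F$ one associates the set $\mathcal Q_P$ consisting of the four lines of $\mathcal F$ not through $P$. A brief check using the fact that every Fano point lies on exactly three lines shows that no three lines of $\mathcal Q_P$ are concurrent: three concurrent lines at a point $Q$ would exhaust the three Fano lines through $Q$, one of which is $PQ$ and so passes through $P$, contradicting the definition of $\mathcal Q_P$. Hence each $\mathcal Q_P$ is a genuine quadrilateral, and as its six pairwise intersections lie in $\mathcal F\subseteq\U$, it is an O'Nan configuration of $\U$.

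Finally I would observe that $\mathcal Q_P$ avoids $\ell^*$ precisely when $P\in\ell^*$. Since $\ell^*$ carries three points of the Fano plane, three of the seven quadrilaterals $\mathcal Q_P$ are disjoint from $\ell^*$; for any such $P$, the four lines of $\mathcal Q_P$ all avoid $T_\infty$, yielding the required O'Nan configuration.

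The only ingredient with substantive content is Theorem~\ref{thm:qeven} itself; the remainder is a short counting argument inside $\PG(2,2)$, so I do not anticipate any real obstacle.
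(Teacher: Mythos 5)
Your proposal is correct and follows essentially the same route as the paper, which derives the corollary from Theorem~\ref{thm:qeven} via the observation that a Fano plane contains seven O'Nan configurations (the complements of its seven points). You merely make explicit the counting the paper leaves implicit, namely that the three quadrilaterals $\mathcal{Q}_P$ with $P$ on the special line $\ell^*$ avoid $\ell^*$ and hence avoid $T_\infty$.
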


\section{Main results for BM-unitals, even $q$}

\subsection{Existence of Fano planes in even order orthogonal BM unitals}

We now complete the proof of Theorem~\ref{thm:qeven}.

Feng and Li choose $a\in\Fq^*$ and $\beta=\delta$ (see Lemma~\ref{lem:ab}(2)), and define
\[
\phi\colon \PG(2,q^2)\rightarrow \PG(2,q^2),\quad\phi\colon (x,y,z)\mapsto (x^q,y^q,z^q)
\]
which stabilises $\Uab$, fixing the line $B_\infty=\{(0,r,1)\colon r\in\Fq\}\cup\{(0,1,0)\}$ of $\Uab$, and no other point of $\Uab$.  They show there exists points $Q,R\in\Uab$ with $Q,R\in[\delta,1,0]$, and that if $P=(0,0,1)$, then the lines
\[
QR^\phi,RQ^\phi,QQ^\phi,RR^\phi
\]
are four lines with the six points of intersection $P,Q,R,Q^\phi,R^\phi$ and $M=QQ^\phi\cap RR^\phi\in B_\infty$ lying on $\Uab$, hence forming an O'Nan configuration.

We now show that this completes to a Fano plane.

Let
\[
N=QR\cap Q^\phi R^\phi,
\]
so $N^\phi=Q^\phi R^\phi\cap QR=N$, and $N$ is a fixed point. As $q$ is even, note that $P,M,N$ are collinear, and as $N$ is on $PM$ and $N$ is fixed, and as $B_\infty$ contains the fixed points on $[1,0,0]$ it follows that $N=(0,s,1)$ for some $s\in\Fq$.  So $N\in\Uab$. Hence, the six lines
\[
QR^\phi,RQ^\phi,QQ^\phi,RR^\phi,QR,Q^\phi R^\phi
\]
and the seven points
\[
P,Q,R,Q^\phi,R^\phi,M,N
\]
all belong to $\Uab$, and so form a Fano plane, completing the proof of Theorem~\ref{thm:qeven}.

\subsection{Number of BM-special Fano planes}
We will show that the number of BM-special Fano planes in orthogonal BM-unitals of even order depends only the order of the unital.

We now calculate the general form of a BM-special Fano plane. By Section~\ref{sec:autosG}, we may assume that the point $V=(0,0,1)$ belongs to the Fano plane. As the Fano plane is BM-special, there are a further two points on $VT_\infty$, without loss of generality let these be   $X=(0,s,1)$ and $Y=(0,t,1)$ for $s,t\in\Fq^*$, $s\ne t$. Through each point of the Fano plane lies three lines, so let $\ell_x=[1,x,0]$ be another line through $V$, so $x\in\Fqq^*$, and let the two points of the Fano plane on $\ell_x$ be $P=(xj,j,1)$ and $Q=(xk,k,1)$ for some $j,k\in\Fqq^*$ with $j\ne k$. So $\ell_x=PQV$. The remaining two points $M,N$ of the Fano plane are determine by $M=PX\cap QY$ and $N=PY\cap QX$, and $M,N,V$ are collinear.

Instead of writing $j$, we can define $h\in\Fqq^*$, $h\ne 1$ by $j=kh$. Then every BM-special Fano plane can be described by the parameters $(a,b,x,k,h,s,t)$. However this description is not unique, as we may swap $X$ and $Y$, $P$ and $Q$ and the line $VPQ$ with the line $VMN$. In any case, {the number of such descriptions} will be same for each Fano plane.

\begin{proof} (of Theorem~\ref{thm:numeven})
  Suppose that $a=a'=1$ and put $b=b_0+b_1e$ and $b'=b_0'+b_1'e$. Suppose there exists a BM-special Fano plane in $\Uab$ with parameters $(1,b,x,k,h,s,t)$. Then we will show that  there exists a BM-special Fano plane in $\Uabp$ with parameters $(1,b',x',k',h,s,t)$ with $xk=x'k'$.

  Write $\theta=xk$. We have, with   $h=j/k \in\Fqq$, $P=(\theta h,k h,1)$ and $Q=(\theta,k,1)$ with $\theta,k,h\in\Fqq^*$, $X=(0,s,1)$ and $Y=(0,t,1)$ with $s,t\in\Fq^*$. Let $V=(0,0,1)$. Let $M=PX\cap QY$ and $N=PY\cap QX$, so
  \[
  M=(xkW,kW+U,1)=(\theta W,kW+U), \quad  N=(xkV,kV+Z,1)=(\theta V,kV+Z,1)
  \]
  where
  \[
W=\frac{h(s+t)}{s+h t},\quad  U=\frac{st(1+h)}{s+h t},\quad
V=\frac{h(s+t)}{t+h s},\quad  Z=\frac{st(1+h)}{t+h s}.
\]
The conditions for a complete O'Nan configuration in unital $\U_{a,b}$ with $(a,b)=(1,b_0+b_1\delta)$ are (using the notation as described in Section~\ref{sec:eqns}).
\begin{eqnarray}
  \boxed{(\theta h)^2}+b_1(\theta h)^{q+1}&=&\boxed{kh}\label{eqn:Pd}\\
  \boxed{\theta^2}+b_1\theta^{q+1}&=&\boxed{k}\label{eqn:Qd}\\
  \boxed{(\theta W)^2}+b_1(\theta W)^{q+1}&=&\boxed{kW}+\boxed{U}\label{eqn:Md}\\
  \boxed{(\theta V)^2}+b_1(\theta V)^{q+1}&=&\boxed{kV}+\boxed{Z}\label{eqn:Nd}
\end{eqnarray}

Note if we fix $\theta,h\in\Fqq$, then the first two equations  determines $k=k_0+k_1\delta$, and $k_0$ and $k_1$ depend linearly on $b_1$. In this case, the LHS of (\ref{eqn:Pd}) and (\ref{eqn:Qd}) are fixed linear expressions in $b_1$ and,  as $k+k^q=k_1$, (\ref{eqn:Qd}) determines $k_1$ and using (\ref{eqn:Pd}), since  $\boxed{kh}$ is the $\Fq$-coefficient of $\delta$ of the expression for $kh$, {\it i.e.} $k_0h_1+k_1h_0+k_1h_1$, this determines $k_0$ and hence $k$.

Now suppose the above configuration, $(1,b,x,k,h,s,t)$, is a complete BM-special Fano plane, and so the above equations hold. We will define a configuration in $\U_{a',b'}$ with parameters $(1,b',x',k',h,s,t)$ with $xk=x'k'$, and show that it is a complete BM-special Fano plane in $\U_{a',b'}$. The  configuration $\U_{a',b'}$ has the points $X,Y,V$ in common with the configuration in $\U_{a,b}$ and
 \[
P'=(x'k'h,k'h,1)=(\theta h,k'h,1) \quad Q'=(x'k',k',1)=(\theta,k',1)
\]
and
\[
  M'=(\theta W,k'W+U,1),\quad N'=(\theta V,k'V+Z,1).
  \]
(Note that $W,U,V,Z$ only involve $h,s,t$ which are fixed for the two scenarios $\U_{a,b},\U_{a',b'}$.)

We require that the following equations hold for the configuration to be a Fano plane in $\U_{a',b'}$:
\begin{eqnarray}
  \boxed{(\theta h)^2}+b_1'(\theta h)^{q+1}&=&\boxed{k'h}\label{eqn:Pdd}\\
  \boxed{\theta^2}+b_1'\theta^{q+1}&=&\boxed{k'}\label{eqn:Qdd}\\
  \boxed{(\theta W)^2}+b_1'(\theta W)^{q+1}&=&\boxed{k'W} +\boxed U\label{eqn:Mdd}\\
  \boxed{(\theta V)^2}+b_1'(\theta V)^{q+1}&=&\boxed{k'V}+\boxed Z.\label{eqn:Ndd}
\end{eqnarray}
As before (\ref{eqn:Pdd}) and (\ref{eqn:Qdd}) determine $k'$, and we define $x'=\theta/k'=xk/k'$, so that (\ref{eqn:Pdd}) and (\ref{eqn:Qdd}) hold and express that points $P',Q'$ of the configuration lie in $\U_{a',b'}$. Hence we need to show that (\ref{eqn:Mdd}) and (\ref{eqn:Ndd}) hold.

  We start with (\ref{eqn:Mdd}). As (\ref{eqn:Md}) holds, it is sufficient to show that  (\ref{eqn:Md}) $+$ (\ref{eqn:Mdd}) holds, that is
  \begin{equation}
    (b_1+b_1')\theta^{q+1}W^{q+1}=(k+k')W+(k^q+(k')^q)W^q.\Labele{eqn:join}
    \end{equation}
Looking at RHS (\ref{eqn:join}) we have, using both (\ref{eqn:Pd}), (\ref{eqn:Qd}) and (\ref{eqn:Pdd}), (\ref{eqn:Qdd}),
  \begin{eqnarray*}
    &&(k+k')W+(k^q+(k')^q)W^q\\
    &=&(k+k')\frac{h(s+t)}{s+ht}+(k^q+(k')^q)\frac{h^q(s+t)}{(s+ht)^q}\\
    &=&\frac{s+t}{(s+ht)^{q+1}}\left((k+k')(s+h^qt)h+(k+k')^q(s+ht)h^q\right)\\
    &=&\frac{s+t}{(s+ht)^{q+1}}\left(s[(k+k')h+(k+k')^qh^q]+t[(k+k')h^{q+1}+(k+k')^qh^{q+1}]\right)\\
     &=&\frac{s+t}{(s+ht)^{q+1}}\left(s[\,\boxed{kh}+\boxed{k'h'}]+th^{q+1}[\boxed k+\boxed{k'}]\right)\\
   &=&\frac{s+t}{(s+ht)^{q+1}}(s[\,\boxed{(\theta h)^2}+b_1\theta^{q+1}h^{q+1}+\boxed{(\theta h)^2}+b_1'\theta^{q+1}h^{q+1}]\\
    &&\quad\quad\quad+\ th^{q+1}[\,\boxed{\theta^2}+b_1\theta^{q+1}+\boxed{\theta^2}+b_1'\theta^{q+1}])\\
      &=&\frac{s+t}{(s+ht)^{q+1}}(s(b_1+b_1')\theta^{q+1}h^{q+1}+th^{q+1}(b_1+b_1')\theta^{q+1})\\
      &=&\frac{s+t}{(s+ht)^{q+1}}(s+t)(b_1+b_1')\theta^{q+1}h^{q+1}\\
      &=&\left(\frac{h(s+t)}{(s+ht)}\right)^{q+1}(b_1+b_1')\theta^{q+1}\\
    &=&W^{q+1}(b_1+b_1')\theta^{q+1}
  \end{eqnarray*}
  which is equal to LHS (\ref{eqn:join}), as required, showing that $M'\in \U_{a',b'}$. The argument for $N'\in \U_{a',b'}$ is similar. Thus we have shown that $X,Y,V,P',Q',M',N'$ form a BM-special Fano plane.
    Thus there is a correspondence between BM-special Fano planes in the two unitals and so the two unitals contain the same number of BM-special Fano planes.
\end{proof}

\section{Tits Unitals}

\subsection{Discussion}
Feng and Li~\cite{FengLi2019} also show the existence of O'Nan configurations in Tits unitals. We have investigated their constructions and in some, but not all, cases, their construction can be completed to a Fano plane contained within the Tits unital. In particular, for $q<2048$, our Magma~\cite{magma} calculations report that for each appropriate value of $q$ there is at least one such Fano plane, with the exception of $q=128$.

\subsection{Examples}
The Tits unital has infinite point $(0,1,0)$ and finite points, for $x=x_0+x_1\delta$ and $x_0,x_1,r\in\Fq$
\[
P_{x,r}=(x_0+x_1\delta,\ r+ f(x_0,x_1)\delta, 1)
\]
where $f(x,y)=x^{\tau+1}+xy+y^\tau$.

As $q=2^{2m+1}$ is an odd power of 2, following Feng and Li~\cite{FengLi2019} we suppose $\delta\in\F_4$ satisfies $\delta^2=\delta+1$, further $\delta^q=\delta+1$. Let
\[
\tau\colon \Fqq\rightarrow\Fqq,\quad \tau\colon x\mapsto x^{2^{m+1}},
\]
so if $x\in\Fq$ then $x^{\tau^2}=x^2$.

Here we give an example of a Fano plane contained in the Tits unital obtained by extending the Feng and Li construction.

\begin{example}
  Suppose $q=8$, so $q=2^{2m+1}$ for $m=1$, and $\tau\colon x\mapsto x^2$ for $x\in\Fqq$. Suppose $w$ is a generator of $\Fq^*$, so $w^7=1$ and $w^3=w+1$. Choose $r_1=w^2$, $r_1=w^5$, so we have $V=(0,0,1)$
  \[
R_1=(  0, w^2,   1),\ \ R_2=(  0, w^5 ,  1),\ \ P=(      1 ,  e + w    ,   1)
\]
and
\[
P_1=(w^5+w^6\delta  ,   w^6+\delta  ,         1),\ \ P_2=(w^3+w^6\delta  ,   w^4+\delta ,          1),\ \  M=(w^3 +w^5\delta  ,    1+\delta      ,     1)
\]
We can check that $P_1\in\U_T$, by calculating
\[
f(w^5,w^6)=(w^5)^{4+2}+w^5w^6+(w^6)^4=w^{30}+w^{11}+w^{24}=w^2+w^4+w^3=1
\]
using $w^3=w+1$ so $w^4=w^2+w$. Looking at the $\delta$ part of the second coordinate $w^6+\delta$, this is 1, so this confirms that $P_1\in\U_T$. Similarly for $P_2$. We now check $M$
\[
f(w^3,w^5)=w^{18}+w^8+w^{20}=1
\]
as required. This shows that the points all lie on $U_T$, and it is easy to check they form a Fano plane.
\end{example}

\begin{example}
Suppose now that $q=32$. We choose a generator $w$ of $\F_{32}$ satisfying $w^5=w^2+1$. Her we have $r_1=w^{4}$ and $r_2=w^{5}$. We have $\tau\colon x\mapsto x^{2^3}=x^8$.
 \[
R_1=(  0, w^{4},   1),\ \ R_2=(  0, w^{5} ,  1),\ \ P=(      1 ,  w^{26}+e    ,   1)
\]
and
\[
P_1=(w^{11}+w^{24}\delta  ,   w^{16}+w^{29}\delta  ,         1),\ \ P_2=(w^{15}+w^{8}\delta  ,   w^{20}+w^{13}\delta ,          1),\ \  M=(w^{15} +\delta  ,    w^{4}+w^{29}\delta      ,     1)
\]

\end{example}
Computation shows that no O'Nan configuration contained in the Tits unital constructed by Feng and Li for the case $q=128$ extends to a Fano plane.
\section{Conclusion}
Firstly, consider the non-classical orthogonal BM unitals of even order.
The O'Nans of Feng and Li have a line which contains the special point of the unital. We have shown their construction easily extends to a Fano plane contained in the unital, and consequently, contain an O'Nan configuration whose lines do not contain the special point. Further, we show that the number of these BM-special Fano planes in an orthogonal BM-unital depends only on the order of the unital, that is, it is the same for inequivalent orthogonal BM-unitals of the same even order.

Note that by our calculations using Magma~\cite{magma} we can find BM-ordinary Fano planes for small values of $q$, however is is an open conjecture whether these exist for all even $q$.

Now consider the Tits unital. In this case computations have shown that the O'Nan configurations of Feng and Li  complete to a Fano plane for $q=8,32,512$, and does not for $q=128$. Separate computations have shown however, that Fano planes exists in the Tits unital for $q=128$ which necessarily is not one arising from the construction of Feng and Li.

\end{document}